\newcommand{\LL} {\mathscr{L^*}}
\newcommand{\LLinf} {\mathscr{L}}
\newtheorem{proposition}{Proposition}
\newtheorem{remark}{Remark}
\begin{document}
	\title{Computing wedge probabilities: finite time horizon case}
	\author{Dmitry Muravey\footnote{e-mail:d.muravey87@gmail.com.}}
	\date{}
	\maketitle
		
\begin{abstract}  
We present an alternative to the well-known Anderson's formula for the probability that a first exit time from the planar region between two slopping lines $-a_1 t -b_1$ and $a_2 t + b_2 $ by a standard Brownian motion is greater than $T$. As the Anderson's formula, our representation is an infinite series from special functions. We show that convergence rate of both formulas depends only on terms $(a_1 + a_2)(b_1 + b_2)$ and $(b_1 + b_2)^2 /T$ and deduce simple rules of appropriate representation's choose. We prove that for any given set of parameters $a_1$, $b_1$, $a_2$, $b_2$, $T$ the sum of first 6 terms ensures precision $10^{-16}$. 
\end{abstract}

\tableofcontents

\section{Introduction}
Let $W = \{ W_t, \, t \geq 0 \}$ be a standard Brownian motion.  We consider the following exit times 
\[
T^s = \inf\{t\geq 0, \, X_t \notin [-a_1, a_2]\}, \quad 
T^w = \inf\{t\geq 0, \, X_t \notin [-a_1t - b_1, a_2t + b_2]\},
\]
where process $X$ is defined as $X = \{x_0 + W_t, \, t \geq 0\}$. Let's denote by $\mathbb{P}_{x_0}$ probability conditional on the process $X_t$ started at $X_0 = x_0$. We omit subscript $x_0$ if process $X_t$ starts at zero, i.e. if $X_t$ is a standard Brownian motion.  The probability $\mathbb{P} (T^w > T)$ is called wedge probability and have been studied by many authors \cite{Anderson}, \cite{BarbaEscriba}. Anderson \cite{Anderson} was the first to obtain the explicit representation for the $\mathbb{P} (T^w > T)$ in terms of infinite series from normal c.d.f. functions. The explicit formula for $\mathbb{P}(T^w = \infty)$  was found by Doob \cite{Doob}. Based on properties of Jacobi theta functions, Ycart and Drouilhet \cite{Ycart} have found alternative to Doob's formula. They also computed uniform precision estimates and proposed efficient numerical algorithm. Let us mention that connection of $\mathbb{P}(T^w = \infty)$ with theta functions has been pointed out by Salminen and Yor \cite{SalminenYor}.    

In this paper we generalize results from \cite{Ycart} to the finite time horizon case.  Based on Lie symmetries for the heat equation, we deduce alternative to Anderson's formula.  Our representation is an infinite series from Error functions with complex argument.  For numerical computations we derive another representation containing only real functions. Combining our results with Anderson' formula we propose simple numerical algorithm for computation of $\mathbb{P} (T^w > T)$.  We show that convergence rate of these two series depends only on two terms: $(a_1 + a_2)(b_1 + b_2)$ and $(b_1 + b_2)^2 / T$. Anderson' formula converges fast if at least one term is relatively high, our formula has fast convergence for the opposite case.  The algorithmic consequence is that computing at most six terms of the series either in Anderson's formula or in the new alternative suffices to approximate $\mathbb{P}_{x_0} (T^w > T)$ with precision smaller than $10^{-16}$. 

The rest of paper is organized by the following scheme: in Section \ref{sec:PDE} we show connections of $T^s$ with $T^w$ in terms of corresponded PDE boundary-value problems. Based on Lie symmetries for the heat equation we derive the solutions of these PDEs in explicit form. Section \ref{sec:Wedge_prob} contains explicit formulas for probability $\mathbb{P}(T^w > T)$. We re-derive famous Anderson's formula and present two new representations. Also we show that Doob's and Ycart and Drouilhet' formulas can be derived as the limiting form $T \rightarrow \infty$. In the last Section \ref{sec:numerics} we present upper bounds for the remainders of infinite series from Anderson' formula and its alternative. Based on these results, we deduce rules describing in which cases Anderson's formula or its alternative should be used and propose simple numerical algorithm for computation of wedge probability $\mathbb{P}(T^w > T)$.    

\section{PDE and Lie symmetries approach to analysis of stopping times $T^s$ and $T^w$}
\label{sec:PDE}
From standard results in probability theory $\mathbb{P}_{x_0} (T^s > T)$ and $\mathbb{P}_{x_0} (T^w > T)$ can be represented in the following form
\[
\mathbb{P}_{x_0} (T^s > T) = \int_{-a_1}^{a_2} u_{x_0}^{s} (x,T) dx, \quad 
\mathbb{P}_{x_0} (T^w > T) = \int_{-a_1 T -b_1}^{a_2 T + b_2} u_{x_0}^{w} (x,T) dx.
\]
Functions $u_{x_0}^{s} (x,t)$ and $u_{x_0}^{w} (x,t)$ solve the following Cauchy problems with killed boundary conditions for Fokker--Planck--Kolmogorov equation: 
\begin{equation}
\left\{ \begin{array}{l} 
\left(\LL - \partial / \partial_t \right) u_{x_0}^{s} = 0, \\
u_{x_0}^{s} (-a_1, t) = 0, \\
u_{x_0}^{s}(a_2, t) = 0, \\
u_{x_0}^{s}(x, 0) = \delta(x-x_0).
\end{array} \right.
\quad\quad\quad\quad 
\left\{ \begin{array}{l} 
\left(\LL - \partial / \partial_t \right) u_{x_0}^{w} = 0, \\
u_{x_0}^{w} (-a_1 t -b_1, t) = 0, \\
u_{x_0}^{w}(a_2 t + b_2, t) = 0, \\
u_{x_0}^{w}(x, 0) = \delta(x-x_0).
\end{array} \right.
\end{equation}
Here $\LL$ is the adjoint of $\LLinf$, which is the infinitesimal generator of the process $X_t$. In case of standard Brownian motion operator $\LLinf$ is self-adjoint, i.e. 
\[
\LL = \LLinf =\frac{1}{2}\frac{\partial^2 }{\partial x^2}.
\]
Initial condition $\delta(x - x_0)$ is a Dirac measure at the point $x_0$. In the next Proposition we present explicit formulas for $u_{x_0}^s$.
\begin{proposition}
\label{prop:u_s}
Function $u_{x_0}^{s} (x,t)$ has two equivalent representations ($l = a_1 + a_2$): 
\begin{eqnarray}
\label{eq:u_s_CFS}
 \begin{array}{l} 
u_{x_0}^s (x,t) = \frac{2}{l} \sum_{n = 0}^{\infty} \sin\left(\frac{n \pi}{l} (x + a_1)\right) 
sin\left(\frac{n\pi}{l} (x_0 + a_1) \right) e^{-\frac{n^2\pi^2}{2 l^2} t}
\\ 
u_{x_0}^{s} (x,t) = \frac{1}{\sqrt{2\pi t}}\sum_{n = -\infty}^{+\infty} \left[ 
e^{-\frac{(x-x_0 + 2nl)^2}{2t}} -
e^{-\frac{(x + x_0 + 2a_1+ 2nl)^2}{2t}} \right].
\end{array}
\end{eqnarray}
\end{proposition}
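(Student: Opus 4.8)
The plan is to establish the two representations separately and then identify them. Both expressions solve the same Cauchy problem for $\LL-\partial_t$ on the strip with killing at $x=-a_1$ and $x=a_2$, so by uniqueness of the bounded solution they must coincide; but the explicit passage from one form to the other via a theta transformation is short, so I would include it.

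\textbf{Spectral expansion.} After the shift $\xi=x+a_1$ the strip becomes $[0,l]$ with homogeneous Dirichlet data at the endpoints. Since $\LL=\tfrac12\partial_{xx}$ is self-adjoint, separation of variables gives the complete orthonormal system $\{\sqrt{2/l}\,\sin(n\pi\xi/l)\}_{n\ge1}$ of $L^2(0,l)$ with eigenvalues $-n^2\pi^2/(2l^2)$. Expanding the initial datum distributionally as $\delta(\xi-\xi_0)=\tfrac2l\sum_{n\ge1}\sin(n\pi\xi/l)\sin(n\pi\xi_0/l)$ and propagating the $n$-th mode by $e^{-n^2\pi^2t/(2l^2)}$ yields the first line of \eqref{eq:u_s_CFS} (the $n=0$ term vanishes identically). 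For $t>0$ the coefficients decay faster than any polynomial, so the series and all its $x$- and $t$-derivatives converge absolutely and locally uniformly; hence it is a classical solution, and $u^s_{x_0}(\cdot,t)\to\delta(\cdot-x_0)$ as $t\downarrow0$ in $\mathscr{D}'$.

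\textbf{Method of images.} The two boundary reflections $R_{-a_1}\colon x\mapsto-2a_1-x$ and $R_{a_2}\colon x\mapsto2a_2-x$ generate the infinite dihedral group on $\mathbb{R}$, with $R_{a_2}R_{-a_1}$ equal to translation by $2l$. The orbit of $x_0$ thus splits into the ``even'' images $x_0+2nl$ and the ``odd'' images $-x_0-2a_1-2nl$, $n\in\mathbb{Z}$. Putting a positive unit source at each even image and a negative one at each odd image and superposing the free kernel $\tfrac1{\sqrt{2\pi t}}e^{-(x-y)^2/(2t)}$ gives the second line of \eqref{eq:u_s_CFS}. Each summand solves the heat equation; on $x=-a_1$, resp. $x=a_2$, each even image cancels its mirror odd image, so both Dirichlet conditions hold; and as $t\downarrow0$ only the $n=0$ positive Gaussian survives near $x_0\in(-a_1,a_2)$, recovering $\delta(x-x_0)$. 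Absolute convergence for $t>0$ is clear from the quadratic exponents.

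\textbf{Identification and main obstacle.} I would then apply the Jacobi theta (Poisson summation) identity
\[
\sum_{n\in\mathbb{Z}}e^{-\frac{(z+2nl)^2}{2t}}=\frac{\sqrt{2\pi t}}{2l}\sum_{k\in\mathbb{Z}}e^{-\frac{\pi^2k^2t}{2l^2}}e^{\frac{i\pi kz}{l}}
\]
with $z=x-x_0$ to the even family and with $z=x+x_0+2a_1$ to the odd family. The prefactors $1/\sqrt{2\pi t}$ cancel; factoring $e^{i\pi k(x+a_1)/l}$ out of $e^{i\pi k(x-x_0)/l}-e^{i\pi k(x+x_0+2a_1)/l}$ leaves $-2i\sin(\pi k(x_0+a_1)/l)$, and combining the $k$ and $-k$ terms (even exponential, odd sine) turns the phases into $2i\sin(\pi k(x+a_1)/l)$; the $k=0$ term is zero, and what remains is exactly $\tfrac2l\sum_{k\ge1}\sin(\tfrac{k\pi}{l}(x+a_1))\sin(\tfrac{k\pi}{l}(x_0+a_1))e^{-k^2\pi^2t/(2l^2)}$. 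I expect the only delicate points to be the distributional sense in which both series attain the Dirac initial value — which is where the uniqueness argument earns its keep — and, in the identification step, keeping straight the half-period offset between the even and odd image families and tracking how the alternating source signs collapse two complex-exponential sums into a single product of sines; everything else is standard Sturm–Liouville theory or the elementary verification that the image series satisfies the PDE, the boundary conditions, and the initial condition.
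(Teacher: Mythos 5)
Your proof is correct, and the first half coincides with the paper's: the spectral (Fourier sine series) derivation of the first representation is exactly what the author does, down to expanding $\delta(\xi-\xi_0)$ in the Dirichlet eigenbasis on $[0,l]$. Where you genuinely diverge is the second representation. The paper obtains the image series by taking a Laplace transform in $t$, solving the resulting two-point boundary-value problem for an ODE with a $\delta$-source, and inverting the transform; you instead build it directly by the method of images, observing that the two boundary reflections generate the infinite dihedral group whose translation subgroup has period $2l$, and superposing signed free heat kernels over the orbit of $x_0$. Your route is more elementary and self-contained (no contour integration or transform inversion is needed, and the boundary cancellation and the recovery of the initial datum are visible term by term), at the cost of having to argue separately that the image series is the \emph{same} solution as the eigenfunction series — which you do via Poisson summation, precisely the identity the paper relegates to the remark following the proposition; the paper's Laplace-transform route produces the second formula independently and so never needs that identification, though it is less transparent about why the images carry alternating signs. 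One small point worth making explicit if you write this up: uniqueness of the solution to the killed Cauchy problem (which you invoke as a fallback) requires a growth or positivity hypothesis on the strip $[-a_1,a_2]\times(0,T]$; since both of your series are manifestly locally uniformly convergent classical solutions attaining the same distributional initial value, the standard maximum-principle uniqueness for bounded domains applies and closes that gap, but it should be stated rather than left implicit.
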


\begin{proof}
First formula can be derived from the following well-known representation
\[
u_{x_0}^{s} (x,t) = \sum_{n=1}^{\infty} A_n \sin\left( \omega_n (x+ \alpha) \right) e^{-\frac{\omega_n^2 t}{2}}.
\]
The phase shift $\alpha$ and frequencies $\omega_n$ are set to satisfy boundary conditions 	
$u_{x_0}^{s}(-a_1,t) = u_{x_0}^{s}(a_2,t) = 0$, i.e, $\alpha = a_1$, and $\omega_n = \pi n/l$. Hence at the moment $t = 0$ function $u_{x_0}^{s}(x,0)$ is Fourier series on the segment $[-a_1, a_2]$. Calculation of coefficients $A_n$ turns out to the main formula (\ref{eq:u_s_CFS}). 
\par 
Second formula in (\ref{eq:u_s_CFS}) can be obtained by using of Laplace transform with respect to the time variable $t$.  This integral transform reduces original initial boundary problem to the simple boundary problem for linear ODE:
\begin{eqnarray}
\label{eq:ODE_for_u_two-sided_Laplace_transform} 
\left \{ \begin{array}{l}
\frac{1}{2} \frac{d^2 w}{dx^2} - \zeta w = -\delta(x-x_0), 
\\ 
w(-a_1; \zeta) = 0, 
\\ 
w(a_2; \zeta) = 0.
\end{array}
\right.
\end{eqnarray}
Here $w(x; \zeta)$ is an image of transformation, i.e.
\[
w(x; \zeta) = \int_{0}^{\infty} e^{-\zeta t} u_{x_0}^{s} (x,t) dt, \quad\quad
u_{x_0}^{s}(x,t) = \frac{1}{2\pi i}\int_{\gamma-i\infty}^{\gamma+i\infty}e^{\zeta t} w(x ;\zeta) d\zeta. 
\]
The problem (\ref{eq:ODE_for_u_two-sided_Laplace_transform}) can be easily solved by a standard techniques.  Inversion of Laplace transform yields second formula (\ref{eq:u_s_CFS}). 
\end{proof}
\begin{remark}
We can also derive identity between two representations from (\ref{eq:u_s_CFS}) by using Poisson' summation formula
	\begin{equation}
\nonumber
\label{eq: Poisson summation formula}
\sum_{n = -\infty}^{+\infty} e^{-\frac{\pi^2 n^2}{2u}}\cos\left(\pi n v/u\right) = \sqrt{\frac{2u}{\pi}} e^{-\frac{v^2}{2u}}  \sum_{n = -\infty}^{+\infty} e^{-2n^2u}\cosh\left( 2nv\right).
\end{equation}
\end{remark}	
In he next proposition we establish connection of $u_{x_0}^w$ with $u_{x_0}^s$. 
\begin{proposition}
\label{prop:u_w}
Function $u_{x_0}^{w}$ can be explicitly represented in terms of $u_{x_0}^s$: 
\begin{equation}
\label{eq:u_w_from_u_s}
u_{x_0}^{w} (x,t) = 
\frac{a_{+} e^{\frac{(a_{+}x_0 + d)^2}{2a_{+} b_{+}} - \frac{(a_{+} x - d)^2}{2a_{+} (a_{+}t + b_{+})}} }{\sqrt{b_{+}} \sqrt{a_{+} t + b_{+}} }
u_{(a_{+}x_0-d) / b_{+}}^s \left( \frac{a_{+} x - d}{a_{+}t +b_{+}}, \, \frac{1}{b_{+} / a_{+}} - \frac{1}{t + b_{+} / a_{+}}\right).
\end{equation}
here constants $a_{+}$, $b_{+}$ and $d$ are equal 
\begin{equation}
\label{eq:a_b_d}
a_{+} = \frac{a_1 + a_2}{2}, 
\quad b_{+} = \frac{b_1 + b_2}{2}, 
\quad d = \frac{a_1 b_2 - a_2 b_1}{2}.
\end{equation}
\end{proposition}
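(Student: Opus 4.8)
The plan is to recognize the change of variables in \eqref{eq:u_w_from_u_s} as an instance of the classical Appell (or Mehler) transformation that maps solutions of the heat equation to solutions of the heat equation, combined with an affine shift that straightens the slopping lines into vertical ones. Concretely, I would look for a map of the form
\begin{equation}
\nonumber
u_{x_0}^{w}(x,t) = K(x,t)\, u_{\xi_0}^{s}\bigl(\xi(x,t),\tau(t)\bigr),
\end{equation}
where $\tau$ depends on $t$ only, $\xi$ is affine in $x$ with $t$-dependent coefficients, and $K$ is a Gaussian-type prefactor. The guiding principle is that the group of Lie point symmetries of the one-dimensional heat equation $\LL u = \partial_t u$ is six-dimensional, and the subgroup generated by the ``projective'' (special conformal) generator together with Galilean boosts and dilations produces exactly transformations of this shape. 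So the first step is to write down the general such symmetry, apply it to the known function $u_{\xi_0}^{s}$ from Proposition \ref{prop:u_s}, and then fix the free parameters.

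The parameters are pinned down by three requirements. First, $\LL u_{x_0}^{w} = \partial_t u_{x_0}^{w}$ must hold; since the transformation is a symmetry of the heat operator this is automatic once $K$, $\xi$, $\tau$ have the symmetry-prescribed form, so really this step only confirms the shape. Second, the two moving boundaries $x = -a_1 t - b_1$ and $x = a_2 t + b_2$ must be carried to the two fixed boundaries $\xi = -a_1$ and $\xi = a_2$ of the strip problem; imposing that $\xi(-a_1 t - b_1, t) \equiv -a_1$ and $\xi(a_2 t + b_2, t) \equiv a_2$ for all $t$ gives two affine-in-$t$ identities whose matching of coefficients determines the coefficients of $\xi$ in terms of $a_1,b_1,a_2,b_2$ — this is where the combinations $a_{+} = (a_1+a_2)/2$, $b_{+}=(b_1+b_2)/2$, $d=(a_1 b_2 - a_2 b_1)/2$ emerge, and where the time change $\tau(t) = \tfrac{1}{b_{+}/a_{+}} - \tfrac{1}{t + b_{+}/a_{+}}$ comes out (the reciprocal-type time change is the hallmark of the projective symmetry). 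Third, the initial condition: as $t \downarrow 0$, $\tau(t) \downarrow 0$ as well, so $u_{\xi_0}^{s}(\cdot,\tau)$ collapses to $\delta(\cdot - \xi_0)$; pushing this delta back through the affine change of variables $\xi \mapsto x$ and through the prefactor $K$ must reproduce $\delta(x - x_0)$. This forces $\xi_0 = \xi(x_0,0) = (a_{+}x_0 - d)/b_{+}$ and fixes the normalization of $K$ via the Jacobian $|\partial \xi/\partial x|$ evaluated at $t=0$, together with the requirement $K(x_0,0)=$ (that Jacobian); checking that the Gaussian exponent in $K$ indeed vanishes appropriately at $t=0$, $x=x_0$ is the one genuinely fiddly computation.

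I expect the main obstacle to be purely bookkeeping: carrying the $t$-dependent affine substitution through the Gaussian prefactor and verifying that the exponent
$\tfrac{(a_{+}x_0+d)^2}{2a_{+}b_{+}} - \tfrac{(a_{+}x-d)^2}{2a_{+}(a_{+}t+b_{+})}$
is exactly the one dictated by the symmetry (equivalently, that no extra $t$-dependent factor is needed to kill the cross terms produced when $\LL$ hits the prefactor). A clean way to sidestep the heaviest algebra is to verify the claim directly rather than rederive it: (i) substitute the right-hand side of \eqref{eq:u_w_from_u_s} into $\LL - \partial_t$ and check it annihilates, using that $u_{\xi_0}^{s}$ solves its own heat equation; (ii) check the two boundary conditions by plugging $x = -a_1 t - b_1$ and $x = a_2 t + b_2$ into the argument $(a_{+}x-d)/(a_{+}t+b_{+})$ and simplifying to $-a_1$ and $a_2$; (iii) check $u_{x_0}^{w}(x,0)=\delta(x-x_0)$ by the delta-function change-of-variables argument above. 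Uniqueness of the solution to the Cauchy problem with killing on the (locally Lipschitz) moving boundary then finishes the proof. Alternatively, and perhaps most transparently, one can note that the second, image-sum representation in \eqref{eq:u_s_CFS} lets one write $u_{x_0}^{s}$ as an alternating sum of Gaussian heat kernels, and the transformation \eqref{eq:u_w_from_u_s} acts on each Gaussian by the standard Brownian ``space-time'' change of variables (a Girsanov/Doob $h$-transform interpretation: killing at sloping lines for BM equals killing at straight lines after an exponential change of measure plus a deterministic time substitution), which makes every step a one-line Gaussian identity.
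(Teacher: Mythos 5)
Your proposal is correct and follows essentially the same route as the paper: direct verification that the right-hand side of (\ref{eq:u_w_from_u_s}) satisfies the heat equation (as a Lie point symmetry of $\LL - \partial_t$), maps the two sloping boundaries to $\xi=-a_1$ and $\xi=a_2$, and collapses to $\delta(x-x_0)$ at $t=0$ via the delta-function change of variables. The only addition beyond the paper's argument is your explicit appeal to uniqueness of the killed Cauchy problem, which the paper leaves implicit.
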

\begin{proof}
   This fact follows from Lie symmetries for the heat equation. One can check by the direct calculations that function $u_{x_0}^{w}$ from (\ref{eq:u_w_from_u_s}) solves equation
	$\left(\LL - \partial_t\right) u_{x_0}^{w} = 0$. Note that function $u_{x_0}^s$ is equal to zero if $x = -a_1$ or $x = a_2$, therefore $u_{x_0}^{w}$ equals zero if $x$ solves one of the following equations: 
	\[  \frac{a_{+} x - d}{a_{+}t +b_{+}}  = -a_1, \quad 
		\frac{a_{+} x - d}{a_{+}t +b_{+}}  = a_2.
	\]
Hence the boundary conditions $u_{x_0}^w (-a_1 t -b_1, t) = u_{x_0}^w (a_2 t +b_2, t) =0$ are satisfied. Now we check the initial condition $u_{x_0}^w(x,0)$:
\begin{eqnarray}
\nonumber
u_{x_0}^w(x,0) 
&=&  
\frac{a_{+}}{b_{+}} 
e^{\frac{(a_{+}x_0 + d)^2}{2a_{+} b_{+}} - \frac{(a_{+} x - d)^2}{2a_{+} (a_{+}t + b_{+})}}
u_{(a_{+}x_0-d) / b_{+}}^s \left( \frac{a_{+} x - d}{b_{+}}, \, 0\right)
\\ \nonumber 
&=& 
\frac{a_{+}}{b_{+}}
e^{\frac{(a_{+}x_0 + d)^2}{2a_{+} b_{+}} - \frac{(a_{+} x - d)^2}{2a_{+} (a_{+}t + b_{+})}}
\delta \left(\frac{a_{+} x - d}{b_{+}} - \frac{a_{+}x_0-d} {b_{+}} \right)
\\ \nonumber 
&=& 
\frac{a_{+}}{b_{+}} 
e^{\frac{(a_{+}x_0 + d)^2}{2a_{+} b_{+}} - \frac{(a_{+} x - d)^2}{2a_{+} (a_{+}t + b_{+})}}
\delta \left(\frac{a_{+}}{b_{+}} (x-x_0) \right)
\\ \nonumber 
&=& 
\frac{a_{+}}{b_{+}} 
\delta \left(\frac{a_{+}}{b_{+}} (x-x_0) \right)
\\ \nonumber
&=& 
\delta \left(x-x_0\right)
\end{eqnarray}
\end{proof}

\section{Explicit formulas for wedge probability $\mathbb{P}(T^w > T)$}
\label{sec:Wedge_prob}
\subsection{Some auxiliary functions and terms}
Let's define the following function $\Psi(\alpha, \beta)$:
\begin{eqnarray}
\label{eq:Psi_def}
\Psi(\alpha, \beta)  = \frac{1}{\sqrt{2\pi}} \int_{\alpha}^{\beta} e^{-x^2/2} dx.
\end{eqnarray}
and recall well known Error function $erf(x)$ and normal c.d.f. $\Phi(x)$:
\begin{equation}
erf(x) = \frac{2}{\sqrt{\pi}} \int_{0}^{x} e^{-t^2} dt, \quad 
\Phi(x) =\frac{1}{\sqrt{2\pi}} \int_{-\infty}^{x} e^{-t^2/2} dt. 
\end{equation} 
Function $\Psi(\alpha, \beta)$ can be represented in terms of $erf(x)$ and $\Phi(x)$
\begin{equation}
\label{eq:Psi_def_2}
\Psi(\alpha, \beta) = \Phi(\beta) - \Phi(\alpha), \quad \Psi(\alpha, \beta) = \frac{erf(\beta/\sqrt{2}) - erf(\alpha/\sqrt{2})}{2}.
\end{equation}
Let us note that function $\Psi(\alpha, \beta)$ also has the following properties:
\begin{equation}
\label{eq:Psi_properties}
\Psi(\alpha, \beta) = \Psi (-\beta ,- \alpha), \quad \Psi(-\infty, +\infty) = 1.
\end{equation}

\subsection{Explicit formulas for wedge probability}
We will use the following notations for wedge probabilities $\mathbb{P} (T^w > T)$ and $\mathbb{P} (T^w  = \infty)$: 
\begin{equation}
k(a_1, b_1; a_2, b_2 ;T) = \mathbb{P} (T^w > T), \quad k(a_1, b_1; a_2, b_2) = \mathbb{P} (T^w = \infty), 
\end{equation}

\begin{proposition}[Anderson]
Wedge probability $k(a_1, b_1; a_2,b_2; T)$ has the following representation:
	\begin{equation}
	\label{eq:Anderson}
	\begin{array}{l}
	k(a_1,b_1; a_2, b_2; T) = \Psi\left(\frac{-a_1 T - b_1}{\sqrt{T}}, \frac{a_2 T + b_2}{\sqrt{T}} \right)  
	\\ 
	-
	\sum_{n = 1} ^{\infty} e^{-2 A_n}
	\Psi\left(\frac{-a_1 T - b_1 +2b_1- 4n b_{+}}{\sqrt{T}}, \,
	\frac{a_2 T + b_2 +2b_1- 4 n b_{+}}{\sqrt{T}}\right)
	\\ 
	-
	\sum_{n = 1} ^{\infty} e^{-B_n}
	\Psi\left(\frac{-a_1 T - b_1 +2b_1 + 4(n-1) b_{+}}{\sqrt{T}}, \,
	\frac{a_2 T + b_2 +2b_1 + 4(n-1) b_{+}}{\sqrt{T}}\right)
	\\ 
	+
	\sum_{n = 1} ^{\infty} e^{-2 C_n}
	\Psi\left(\frac{-a_1 T - b_1 + 4n b_{+}}{\sqrt{T}}, \,
	\frac{a_2 T + b_2 + 4n b_{+}}{\sqrt{T}} \right)
	\\ 
	+
	 \sum_{n = 1} ^{\infty} e^{-2 D_n}
	\Psi\left(\frac{-a_1 T - b_1 - 4n b_{+}}{\sqrt{T}}, \, 
	\frac{a_2 T + b_2 - 4n b_{+}}{\sqrt{T}}\right) 
	\end{array}
	\end{equation}
Here $A_n, B_n, C_n, D_n$ are defined by the following formulas:
\begin{equation}
\label{eq:ABCD_def}
\begin{array}{c}
A_n = n^2 a_2b_2 + (n-1)^2 a_1 b_1 + n(n-1)(a_2b_1 + a_1b_2), \\
B_n = (n-1)^2 a_2b_2 + n^2 a_1 b_1 + n(n-1)(a_2b_1 + a_1b_2), \\
C_n = n^2 (a_1b_1 + a_2b_2) + n(n - 1) a_2 b_1 + n(n + 1)a_1b_2, \\
D_n = n^2 (a_1b_1 + a_2b_2) + n(n + 1) a_2 b_1 + n(n - 1)a_1b_2,
\end{array}
\end{equation}
and function $\Psi$ is defined in (\ref{eq:Psi_def}).
\end{proposition}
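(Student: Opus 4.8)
The plan is to feed the two structural results of Section~\ref{sec:PDE} into the integral representation of the wedge probability and then integrate the method-of-images series term by term; throughout I work in the natural range $a_+,b_+,T>0$. Setting $x_0=0$ in $\mathbb{P}_{x_0}(T^w>T)=\int_{-a_1T-b_1}^{a_2T+b_2}u_{x_0}^{w}(x,T)\,dx$ and substituting~\eqref{eq:u_w_from_u_s}, I would first pass to the space variable $y=\dfrac{a_+x-d}{a_+T+b_+}$. As already noted in the proof of Proposition~\ref{prop:u_w}, this maps the wedge edges $x=-a_1T-b_1$ and $x=a_2T+b_2$ exactly onto $y=-a_1$ and $y=a_2$, so the integral now runs over the fixed segment $[-a_1,a_2]$; the time argument of $u^{s}$ becomes $\tau=\dfrac{1}{b_+/a_+}-\dfrac{1}{T+b_+/a_+}=\dfrac{a_+^2T}{b_+(a_+T+b_+)}>0$, the source point becomes $-d/b_+$, and $dx=\dfrac{a_+T+b_+}{a_+}\,dy$. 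After absorbing the Jacobian into the Lie prefactor one obtains
\[
\mathbb{P}(T^w>T)=\frac{\sqrt{a_+T+b_+}}{\sqrt{b_+}}\,e^{\frac{d^2}{2a_+b_+}}\int_{-a_1}^{a_2}e^{-\frac{(a_+T+b_+)y^2}{2a_+}}\,u^{s}_{-d/b_+}(y,\tau)\,dy .
\]

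Next I would insert the second (image-sum) representation in~\eqref{eq:u_s_CFS} for $u^{s}_{-d/b_+}(y,\tau)$, recalling $l=a_1+a_2=2a_+$. For fixed $\tau>0$ that series is dominated by a convergent Gaussian series uniformly on $[-a_1,a_2]$, so summation and integration may be exchanged. Each summand is $\pm(2\pi\tau)^{-1/2}e^{-(y-c)^2/(2\tau)}$, the shift $c$ running over two families, $c=-d/b_++4na_+$ (sign $+$, from the first image term) and $c=d/b_+-2a_1-4na_+$ (sign $-$, from the second image term), $n\in\mathbb{Z}$. Multiplying by $e^{-(a_+T+b_+)y^2/(2a_+)}$ and completing the square, one finds that the product Gaussian always has the same width $(a_+T+b_+)^2/(a_+^2T)$, \emph{independent of $c$}, centre $cb_+/(a_+T+b_+)$, and residual constant $e^{-c^2b_+/(2a_+)}$; integrating over $[-a_1,a_2]$ and rescaling by $z=\dfrac{a_+T+b_+}{a_+\sqrt T}\bigl(y-\tfrac{cb_+}{a_+T+b_+}\bigr)$ produces $\sqrt{2\pi}\,\dfrac{a_+\sqrt T}{a_+T+b_+}\,\Psi(z_-,z_+)$ with $z_-=\dfrac{-a_1(a_+T+b_+)-cb_+}{a_+\sqrt T}$ and $z_+=\dfrac{a_2(a_+T+b_+)-cb_+}{a_+\sqrt T}$. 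A short computation using $\sqrt\tau=a_+\sqrt T/(\sqrt{b_+}\sqrt{a_+T+b_+})$ shows that all the square-root and $(2\pi)^{\pm1/2}$ factors cancel, so each summand contributes exactly
\[
\pm\, e^{\frac{d^2}{2a_+b_+}-\frac{c^2b_+}{2a_+}}\,\Psi(z_-,z_+)=\pm\, e^{\frac{(d-cb_+)(d+cb_+)}{2a_+b_+}}\,\Psi(z_-,z_+),
\]
the sign inherited from the image term.

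It remains to reorganise this single series into the leading term plus the four sums of~\eqref{eq:Anderson}, and the crucial elementary identity is $d=a_1b_+-a_+b_1=a_+b_2-a_2b_+$. It gives $-a_1(a_+T+b_+)+d=a_+(-a_1T-b_1)$ and $a_2(a_+T+b_+)+d=a_+(a_2T+b_2)$, so $z_-$ and $z_+$ equal $\dfrac{-a_1T-b_1}{\sqrt T}$ and $\dfrac{a_2T+b_2}{\sqrt T}$, each shifted by $-\dfrac{d+cb_+}{a_+\sqrt T}$. For the $+$-family the index $n=0$ gives $c=-d/b_+$, hence zero residual exponent and no shift: this is the leading $\Psi$-term. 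The remaining $+$-indices split as $n\ge1$ (shift $-4nb_+/\sqrt T$) and $n\le-1$ (reindexed by $m=-n$: shift $4mb_+/\sqrt T$), producing the $D_n$- and $C_n$-sums; the $-$-indices split as $n\le-1$ ($m=-n$: shift $(2b_1-4mb_+)/\sqrt T$) and $n\ge0$ ($m=n+1$: shift $(2b_1+4(m-1)b_+)/\sqrt T$), producing the $A_n$- and $B_n$-sums. In every case the residual exponent is $-2$ times a product of the type $(a_1\mp2na_+)(b_1\mp2nb_+)$, resp.\ $(a_1+2(n-1)a_+)(b_1+2(n-1)b_+)$, which on expansion with $2a_+=a_1+a_2$, $2b_+=b_1+b_2$, $2d=a_1b_2-a_2b_1$ is exactly $A_n,B_n,C_n,D_n$ of~\eqref{eq:ABCD_def}; together with the four shifts just listed this is~\eqref{eq:Anderson}.

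The only genuinely analytic step — interchanging $\sum$ and $\int$ — is harmless by the uniform Gaussian domination, so I expect the real effort to lie in the last paragraph: matching the four reindexed subseries to the four sums of~\eqref{eq:Anderson} in the correct order, carrying the signs over from the image expansion, and verifying the elementary-but-error-prone polynomial identities that turn the products $(a_1\pm2na_+)(b_1\pm2nb_+)$ into $A_n,\dots,D_n$. One byproduct worth flagging is that the four exponential weights come out in the uniform shape $e^{-2(\cdot)}$ with $(\cdot)$ equal to $A_n,B_n,C_n,D_n$; and letting $T\to\infty$ sends each $\Psi\to1$, leaving the $T$-free series that is Doob's / Ycart--Drouilhet's formula for $\mathbb{P}(T^w=\infty)$, as announced in the introduction.
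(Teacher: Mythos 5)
Your proof is correct and follows essentially the same route as the paper's: substitute the Lie-symmetry relation of Proposition \ref{prop:u_w} and the image-sum (second) representation of $u^{s}$ from (\ref{eq:u_s_CFS}), complete the square against the Gaussian prefactor, integrate termwise, and reindex the two bilateral sums into the leading term plus four unilateral sums; the only cosmetic difference is that you first rescale to the fixed segment $[-a_1,a_2]$ and translate the $\Psi$-arguments back via $d=a_1b_+-a_+b_1=a_+b_2-a_2b_+$, whereas the paper completes squares directly in the original variable $x$. One point worth keeping from your write-up: your (correct) observation that all four weights come out in the uniform shape $e^{-2(\cdot)}$ shows that the factor $e^{-B_n}$ in the proposition as stated is a typo for $e^{-2B_n}$, consistent with the paper's own intermediate display in the proof and with the Doob-formula remark.
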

\begin{proof}
Employ identity
\[
\frac{1}{b_{+} / a_{+}} - \frac{1}{T + b_{+} / a_{+}} =
\frac{a_{+}^2}{b_{+} (a_{+} T + b_{+})}
\]
in the (\ref{eq:u_w_from_u_s}) and choose the first formula for $u_{x_0}(x,t)$ from (\ref{eq:u_s_CFS}) with $x_0 = 0$.
\begin{equation}
k(a_1, b_1; a_2, b_2 ;T) = 
\frac{e^{\frac{d^2}{2a_{+} b_{+}}}}{\sqrt{2 \pi T}} \int_{-a_1 T-b_1}^{a_2 T + b_2}
\sum_{n= -\infty}^{+\infty}
e^{-\frac{(a_{+} x -d)^2}{2a_{+} (a_{+} T + b_{+})}} 
\left[ 
e^{-E_1(n)} - e^{-E_2(n)}\right] dx.
\end{equation}
Here
\[
E_1(n) = \left(\frac{a_{+} x - d}{a_{+} T + b_{+}} + \frac{d}{b_{+}} + 4 n a_{+} \right)^2 
,\quad E_2(n) = \left(\frac{a_{+} x - d}{a_{+} T + b_{+}} - \frac{d}{b_{+}}+ 2a_1 + 4 n a_{+} \right)^2 
\frac{b_{+} (a_{+} T + b_{+})}{2 a_{+}^2 T};
\]
One can check that 
\[
E_1(n) + \frac{(a_{+} x -d)^2}{2a_{+} (a_{+} T + b_{+})} - \frac{d^2}{2a_{+} b_{+}}
= \frac{(x + 4 n b_{+})^2}{2 T} + 4nd + 8n^2a_{+} b_{+} 
\]
and 
\begin{eqnarray}
\nonumber
E_2(n) + \frac{(a_{+} x -d)^2}{2a_{+} (a_{+} T + b_{+})} - \frac{d^2}{2a_{+} b_{+}}
&=& \frac{(x + 2b_1 + 4 n b_{+})^2}{2 T}  + 8n^2a_{+}b_{+} + 4n(2 a_1 b_{+} - d) +  2a_1b_1
\\ \nonumber
&=& \frac{(x + 2b_1 + 4 n b_{+})^2}{2 T}  + 8n^2a_{+}b_{+} + 2n(a_1 b_2 +b_1a_2)  + (4n + 2) a_1b_1.
\end{eqnarray}

In the result we have
\begin{equation}
\begin{array}{c}
k(a_1, b_1; a_2, b_2 ;T) = 
\sum_{n= -\infty}^{+\infty}
e^{-2((2n)^2a_{+} b_{+} + 2nd)}
\Psi \left(\frac{-a_1 T-b_1 + 4 n b_{+}}{\sqrt{T}}, \frac{a_2 T+ b_2 + 4 n b_{+}}{\sqrt{T}} \right)
\\
-\sum_{n= -\infty}^{+\infty} e^{-2((2n)^2a_{+}b_{+} + n(a_1 b_2 +b_1a_2) + (2n+1) a_1b_1)}
\Psi \left(\frac{-a_1 T-b_1 + 4 n b_{+} + 2b_1}{\sqrt{T}}, \frac{a_2 T+ b_2+ 4 n b_{+} +2b_1}{\sqrt{T}} \right).
\end{array}
\end{equation}
Rearrangement of the first and second sum 
\[
\sum_{n = -\infty}^{+\infty} F(n) = \sum_{n = 1}^{+\infty} F(n) + \sum_{n = 1}^{+\infty} F(-n) + F(0),
\quad 
\sum_{n = -\infty}^{+\infty} F(n) = \sum_{n = 1}^{+\infty} F(n - 1) + \sum_{n = 1}^{+\infty} F(-n)
\]
yields formula (\ref{eq:Anderson}). It is easy to check that (see also \cite{Ycart})
\begin{equation}
\begin{array}{l}
A_n = 4n^2a_{+}b_{+} - n(a_1 b_2 +b_1a_2) - (2n - 1) a_1b_1, \\
B_n = 4(n-1)^2 a_{+}b_{+} + (n - 1)(a_1 b_2 +b_1a_2) + (2n + 1) a_1b_1, \\
C_n = 4 n^2 a_{+}b_{+} + 2 n d, \\
D_n = 4 n^2 a_{+}b_{+} - 2 n d.
\end{array}
\end{equation}
\end{proof}

\begin{remark}
	Doob' formula 
\[
	k(a_1, b_1; a_2, b_2) = 1 - \sum_{n = 1} ^{\infty} e^{-2A_n} + e^{-2B_n} -e^{-2C_n} -e^{-2D_n}. 
	\]
	can be easily derived from (\ref{eq:Anderson}) if we tends $T \rightarrow \infty$. Note that if $a_1$ and $a_2$ have opposite signs then $k(a_1, b_1; a_2, b_2; \infty) = 0$.  
\end{remark}
\begin{proposition}
	Formula (\ref{eq:Anderson}) has the following alternatives:
	\begin{equation}
	\label{eq:My_formula}
	\begin{array}{c}
	k(a_1, b_1; a_2, b_2;T) =
	i \sqrt{\frac{\pi}{2 a_{+} b_{+}}} e^{\frac{d^2}{2a_{+}b_{+}}} \sum_{n= -\infty}^{+\infty} 
	e^{-\frac{\pi^2 n^2}{8 a_{+} b_{+}} - \frac{ i  n \pi a_1}{2a_{+}}}
	\sin\left( \frac{\pi n b_1}{2b_{+}}\right)
	\times
	\\ 
	\times 	\Psi \left(
	\frac{i \pi n  - 2a_1(a_{+} T + b_{+}) }{2a_{+}\sqrt{T + b_{+}/a_{+}}} 
	,\frac{i \pi n  +  2a_2 (a_{+} T + b_{+}) }{2a_{+}\sqrt{T + b_{+}/a_{+}}}  
	\right)
	\end{array}
	\end{equation}
or 
	\begin{equation}
	\label{eq:my_formula_comp}
	\begin{array}{c}
	k(a_1,b_1;a_2,b_2; T) = 
	\frac{e^{\frac{d^2}{2a_{+} b_{+}}} \sqrt{ T  + b_{+} / a_{+}} }{\sqrt{a_{+}b_{+}}} 
	\sum_{n = 1} ^{\infty}
	e^{-\frac{\pi^2 n^2}{8 a_{+}^2} \left(\frac{1}{b_{+}/a_{+}} - \frac{1}{T + b_{+}/a_{+}} \right)}
	\sin\left( \frac{\pi n b_1}{2 b_{+}}\right) \times 
	\\
	\times
	\int_{0}^{2 a_{+}} 
	\sin\left( \frac{\pi n  \varphi}{2 a_{+}} \right)
	e^{-\frac{(\varphi - a_1)^2 (T + b_{+} / a_{+})}{2}}
	d\varphi.
	\end{array}
	\end{equation}
\end{proposition}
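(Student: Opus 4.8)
The plan is to repeat the computation that produced Anderson's formula (\ref{eq:Anderson}), but starting from the \emph{first} (Fourier sine) representation of $u^s_{x_0}$ in (\ref{eq:u_s_CFS}) rather than the method-of-images one. Writing $k(a_1,b_1;a_2,b_2;T)=\int_{-a_1T-b_1}^{a_2T+b_2}u_0^w(x,T)\,dx$ and substituting (\ref{eq:u_w_from_u_s}) with $x_0=0$, the embedded $u^s$ is taken at source point $-d/b_+$, at time argument $\tau=\tfrac{1}{b_+/a_+}-\tfrac{1}{T+b_+/a_+}$, and at running point $y=\tfrac{a_+x-d}{a_+T+b_+}$; since $l=a_1+a_2=2a_+$, the sine series reads $u^s_{-d/b_+}(y,\tau)=\tfrac{1}{a_+}\sum_{n\ge1}\sin(\tfrac{n\pi}{2a_+}(y+a_1))\sin(\tfrac{n\pi}{2a_+}(a_1-\tfrac{d}{b_+}))\,e^{-\frac{n^2\pi^2}{8a_+^2}\tau}$. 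The first thing to record is the algebraic identity $a_1b_+-d=a_+b_1$, which is immediate from (\ref{eq:a_b_d}); it turns the $n$-th source factor into $\sin(\pi nb_1/(2b_+))$, the factor common to (\ref{eq:My_formula}) and (\ref{eq:my_formula_comp}).

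Next, after noting that the series converges uniformly for $\tau$ bounded away from $0$ (so term-by-term integration is legitimate) and that $y$ stays in the compact set $[-a_1,a_2]$, I would change variables $\varphi=\tfrac{a_+x-d}{a_+T+b_+}+a_1=y+a_1$. By the boundary-mapping relations already used in the proof of Proposition \ref{prop:u_w}, the endpoints $x=-a_1T-b_1$ and $x=a_2T+b_2$ map to $\varphi=0$ and $\varphi=2a_+$; the Jacobian $dx=\tfrac{a_+T+b_+}{a_+}\,d\varphi$ combines with the prefactor $\tfrac{a_+}{\sqrt{b_+}\sqrt{a_+T+b_+}}$ of (\ref{eq:u_w_from_u_s}) and the $\tfrac{2}{l}e^{d^2/(2a_+b_+)}$ of the sine series to produce exactly $\tfrac{e^{d^2/(2a_+b_+)}\sqrt{T+b_+/a_+}}{\sqrt{a_+b_+}}$ (using $a_+T+b_+=a_+(T+b_+/a_+)$), while the Gaussian weight becomes $e^{-\frac{1}{2}(\varphi-a_1)^2(T+b_+/a_+)}$. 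Since the $n=0$ term vanishes, what remains is precisely (\ref{eq:my_formula_comp}).

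To pass to (\ref{eq:My_formula}) I would evaluate the inner integral $I_n=\int_0^{2a_+}\sin(\tfrac{\pi n\varphi}{2a_+})\,e^{-\frac{\lambda}{2}(\varphi-a_1)^2}\,d\varphi$, with $\lambda=T+b_+/a_+$ and $\omega_n=\tfrac{\pi n}{2a_+}$, in closed form: expand $\sin(\omega_n\varphi)=\tfrac{1}{2i}(e^{i\omega_n\varphi}-e^{-i\omega_n\varphi})$, complete the square in each exponential, shift and rescale the integration variable, and identify the outcome through the entire extension of $\Psi$ (the representation $\Psi(\alpha,\beta)=\tfrac{1}{2}(\mathrm{erf}(\beta/\sqrt{2})-\mathrm{erf}(\alpha/\sqrt{2}))$ makes complex endpoints meaningful and, by Cauchy's theorem together with the Gaussian decay, allows the contour to be shifted back to the real line). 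Using $2a_+-a_1=a_2$, the two resulting pieces are $\tfrac{\sqrt{2\pi}}{\sqrt\lambda}\,e^{\pm i\omega_n a_1-\omega_n^2/(2\lambda)}\,\Psi(-\sqrt\lambda\,a_1\mp\tfrac{i\omega_n}{\sqrt\lambda},\;\sqrt\lambda\,a_2\mp\tfrac{i\omega_n}{\sqrt\lambda})$, and $I_n$ is their difference over $2i$.

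Finally I would fold the one-sided sum into a bilateral one. Because $\omega_{-n}=-\omega_n$ and $\sin(\pi nb_1/(2b_+))$ is odd in $n$, the $e^{-i\omega_n\varphi}$-piece at index $n$ coincides with the $e^{+i\omega_n\varphi}$-piece at index $-n$ (consistently with the reflection $\Psi(\alpha,\beta)=\Psi(-\beta,-\alpha)$ of (\ref{eq:Psi_properties})), so keeping the $e^{-i\omega_n\varphi}$-piece gives $\sum_{n\ge1}\sin(\tfrac{\pi nb_1}{2b_+})I_n=-\tfrac{1}{2i}\sum_{n\in\mathbb{Z}}\sin(\tfrac{\pi nb_1}{2b_+})\,\tfrac{\sqrt{2\pi}}{\sqrt\lambda}\,e^{-i\omega_n a_1-\omega_n^2/(2\lambda)}\,\Psi(\,\cdot\,)$, the $n=0$ term dropping out. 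It then remains to collect the scalar factors: $\tfrac{e^{d^2/(2a_+b_+)}\sqrt\lambda}{\sqrt{a_+b_+}}\cdot(-\tfrac{1}{2i})\cdot\tfrac{\sqrt{2\pi}}{\sqrt\lambda}=i\,\sqrt{\tfrac{\pi}{2a_+b_+}}\,e^{d^2/(2a_+b_+)}$; the two Gaussian-in-$n$ exponentials combine because $\tfrac{\pi^2n^2}{8a_+^2}\tau+\tfrac{\omega_n^2}{2\lambda}=\tfrac{\pi^2n^2}{8a_+b_+}$ (the $T$-dependent parts cancel exactly), leaving $e^{-\pi^2n^2/(8a_+b_+)}$; and $-\sqrt\lambda\,a_1+\tfrac{i\omega_n}{\sqrt\lambda}=\tfrac{i\pi n-2a_1(a_+T+b_+)}{2a_+\sqrt{T+b_+/a_+}}$ together with its companion for the upper endpoint. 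This is (\ref{eq:My_formula}). I expect the main obstacle to be exactly this last bookkeeping — tracking the signs through the $\tfrac{1}{2i}$, the contour shift and the fold, and verifying the clean cancellation of the $T$-dependent exponentials — while everything else is the same routine substitution as in Anderson's proof, plus standard justifications for the term-by-term integration and the contour deformation.
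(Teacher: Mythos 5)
Your proposal is correct and follows essentially the same route as the paper: substitute the Fourier sine representation of $u^s$ into (\ref{eq:u_w_from_u_s}) with $x_0=0$, change variables to $\varphi$ to obtain (\ref{eq:my_formula_comp}), evaluate the Gaussian--sine integral in closed form via complex error functions, and fold the one-sided sum into a bilateral one using the oddness in $n$. The only difference is cosmetic — you derive the integral by completing the square and a contour shift where the paper cites Abramowitz--Stegun, and you supply the convergence and contour-deformation justifications the paper leaves implicit.
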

Here function $\Psi$ is defined in (\ref{eq:Psi_def}).
\begin{proof}
From Propositions \ref{prop:u_s} and \ref{prop:u_w} we have the following formula for $k(a_1, b_1; a_2,b_2;T)$:
\begin{equation}
\nonumber
\begin{array}{c}
k(a_1,b_1;a_2,b_2; T) = 
\frac{e^{\frac{d^2}{2a_{+} b_{+}}}}{\sqrt{b_{+}} \sqrt{a_{+} T + b_{+}} }
\sum_{n = 1} ^{\infty}
e^{-\frac{\pi^2 n^2}{8 a_{+}^2} \left(\frac{1}{b_{+}/a_{+}} - \frac{1}{T + b_{+}/a_{+}} \right)}
\sin\left( \frac{\pi n b_1}{2 b_{+}}\right) 
\\
\int_{-a_1 T -b_1}^{a_2 T +b_2} 
\sin\left( \frac{\pi n }{ 2 a_{+}} \left( \frac{a_{+}x-d }{a_{+}T+b_{+}} +a_1\right)\right)
e^{-\frac{(a_{+} x - d)^2}{2a_{+} (a_{+}T + b_{+})}}
dx.
\end{array}
\end{equation}
Change of integration variable $\varphi =a_1 +  (a_{+} x - d)/(a_{+} T + b_{+})$ yields formula (\ref{eq:my_formula_comp}). Definite integral from (\ref{eq:my_formula_comp}) is known (see \cite{AbraStegun}) and can be expressed in terms of Error functions (from complex argument): 
\begin{equation}
\begin{array}{c}
\nonumber
\int_{0}^{2 a_{+}} 
\sin\left( \frac{\pi n  \varphi}{2 a_{+}} \right)
e^{-\frac{(\varphi - a_1)^2 (T + b_{+} / a_{+})}{2}}
d\phi = 
-\frac{i \sqrt{\pi /2}}{2 \sqrt{T + b_{+}/ a_{+}}} e^{- \frac{\pi^2 n^2}{8 a_{+}^2 (T + b_{+}/a_{+})}}
\bigg(
\\ \nonumber
e^{-\frac{n \pi a_1}{2a_{+}}} 
\left(
erf\left( \frac{i \pi n }{2\sqrt{2}a_{+}\sqrt{T + b_{+}/a_{+}}} - 
a_1 \frac{\sqrt{T + b_{+} / a_{+}}}{\sqrt{2}}\right)
- 
erf\left( \frac{i \pi n }{2\sqrt{2}a_{+}\sqrt{T + b_{+}/a_{+}}}  
+a_2 \frac{\sqrt{T + b_{+} / a_{+}}}{\sqrt{2}}\right)
\right)
\\ \nonumber
-
e^{\frac{n \pi a_1}{2a_{+}}} 
\left(
erf\left( -\frac{i \pi n }{2\sqrt{2}a_{+}\sqrt{T + b_{+}/a_{+}}} - 
a_1 \frac{\sqrt{T + b_{+} / a_{+}}}{\sqrt{2}}\right)
- 
erf\left( -\frac{i \pi n }{2\sqrt{2}a_{+}\sqrt{T + b_{+}/a_{+}}}  
+a_2 \frac{\sqrt{T + b_{+} / a_{+}}}{\sqrt{2}}\right)
\right)
\bigg)
\end{array}
\end{equation}
or (see (\ref{eq:Psi_def_2}))
\begin{equation}
\begin{array}{c}
\nonumber
\int_{0}^{2 a_{+}} 
\sin\left( \frac{\pi n  \varphi}{2 a_{+}} \right)
e^{-\frac{(\varphi - a_1)^2 (T + b_{+} / a_{+})}{2}}
d\phi = 
\frac{i \sqrt{\pi /2}}{2 \sqrt{T + b_{+}/ a_{+}}} e^{- \frac{\pi^2 n^2}{8 a_{+}^2 (T + b_{+}/a_{+})}}
\bigg(
\\ \nonumber
e^{-\frac{n \pi a_1}{2a_{+}}} 
\Psi\left( 
\frac{i \pi n }{2 a_{+}\sqrt{T + b_{+}/a_{+}}} - 
a_1 \sqrt{T + b_{+} / a_{+}}, 
\frac{i \pi n }{2 a_{+}\sqrt{T + b_{+}/a_{+}}}  
+a_2 \sqrt{T + b_{+} / a_{+}}
\right)
\\ \nonumber
-
e^{\frac{n \pi a_1}{2a_{+}}} 
\Psi\left( 
-\frac{i \pi n }{2 a_{+}\sqrt{T + b_{+}/a_{+}}} - 
a_1 \sqrt{T + b_{+} / a_{+}}, 
-\frac{i \pi n }{2 a_{+}\sqrt{T + b_{+}/a_{+}}}  
+a_2 \sqrt{T + b_{+} / a_{+}}
\right)
\bigg)
\end{array}
\end{equation}
Using the last formula in (\ref{eq:my_formula_comp}) and changing the sign of $n$ in the second term gives the representation (\ref{eq:My_formula}). 
\end{proof}

\begin{remark}
Ycart and Drouilhet formula can be derived from (\ref{eq:My_formula}) by $T \rightarrow \infty$:
\begin{eqnarray}
\nonumber
k(a_1, b_1; a_2, b_2;\infty) &=& 
	i \sqrt{\frac{\pi}{2 a_{+} b_{+}}}  e^{\frac{d^2}{2a_{+}b_{+}}} \sum_{n= -\infty}^{+\infty} 
\sin\left( \frac{\pi n b_1}{2b_{+}}\right)
e^{-\frac{\pi^2 n^2}{8 a_{+} b_{+}} - \frac{ i \pi a_1}{2a_{+}}}
\\ \nonumber
&=&
i \sqrt{\frac{\pi}{2 a_{+} b_{+}}} e^{\frac{d^2}{2a_{+}b_{+}}}\sum_{n= 1}^{+\infty} 
\sin\left( \frac{\pi n b_1}{2b_{+}}\right)
e^{-\frac{\pi^2 n^2}{8 a_{+} b_{+}}}
\left(e^{- \frac{ i \pi a_1}{2a_{+}}} - e^{\frac{ i \pi a_1}{2a_{+}}} \right).
\\ \nonumber
&=&
2\sqrt{\frac{\pi}{2 a_{+} b_{+}}} e^{\frac{d^2}{2a_{+}b_{+}}} \sum_{n= 1}^{+\infty} 
\sin\left( \frac{\pi n b_1}{2b_{+}}\right)
\sin \left(\frac{\pi n a_1}{2a_{+}}\right)
e^{-\frac{\pi^2 n^2}{8 a_{+} b_{+}}}.
\\ \nonumber
&=&
\sqrt{\frac{\pi}{2 a_{+} b_{+}}} e^{\frac{d^2}{2a_{+}b_{+}}} \sum_{n= 1}^{+\infty} 
\left(
\cos\left(\frac{\pi n b_1}{2b_{+}} - \frac{\pi n a_1}{2a_{+}}\right) -
\cos\left(\frac{\pi n b_1}{2b_{+}} + \frac{\pi n a_1}{2a_{+}}\right)
\right) 
e^{-\frac{\pi^2 n^2}{8 a_{+} b_{+}}}.
\\ \nonumber
&=&
\sqrt{\frac{\pi}{2 a_{+} b_{+}}} e^{\frac{d^2}{2a_{+}b_{+}}} \sum_{n= 1}^{+\infty} 
\left(
\cos\left(\frac{\pi n d}{2a_{+} b_{+}}\right) +
(-1)^{n+1} \cos\left(\frac{\pi n c}{2a_{+}b_{+}}\right)
\right) 
e^{-\frac{\pi^2 n^2}{8 a_{+} b_{+}}}.
\end{eqnarray}
Here $c$ is defined as $(a_1b_1 - a_2b_2)/2$ (see \cite{Ycart}).
\end{remark}

\section{Computational aspects}
\label{sec:numerics}
\subsection{Convergence rates} 
\begin{proposition}
Let's denote by $K_{1, N}$ and $K_{2,N}$ the partial sums in series (\ref{eq:Anderson}) and (\ref{eq:my_formula_comp}) respectively and consider the remainders:
\[
R_{1,N} = K_{1,\infty} - K_{1,N}, \quad R_{2, N} = K_{2,\infty} - K_{2,N}. 
\]
For $N > 1$:
\begin{eqnarray}
\label{eq:R_1_bound}
R_{1,N} &\leq&  \left\{ \begin{array}{l} 
 \sqrt{\frac{2}{\pi b_{+}^2 / T}}\frac{a_{+}b_{+} + b_{+}^2 / T}{ 
	 (4a_{+}b_{+} + b_{+}^2 / T)(N - 1)} e^{-\left(N - 1\right)^2 (4a_{+}b_{+} + 4b_{+}^2 /T)}, \quad 
N \geq a_{+} T / b_{+}.
\\
\frac{1}{4(N-1)a_{+}b_{+}} e^{-8\left(N - 1\right)^2 a_{+} b_{+} }.
\end{array} \right.
\\
\label{eq:R_2_bound}
R_{2, N} &\leq&  
2 \left( \frac{2}{\pi}\right)^{3/2}
\frac{a_{+} b_{+} + b_{+}^2 / T }{ N \sqrt{a_{+} b_{+}}} e^{2 a_{+} b_{+}} 
e^{-\frac{\pi^2 N^2}{8 (a_{+}b_{+} + b_{+}^2 / T)}}.
\end{eqnarray}
\end{proposition}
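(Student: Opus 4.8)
\emph{Overall strategy.} In both cases I treat the remainder as the tail $\sum_{n>N}$ of the corresponding series, bound the modulus of the $n$-th summand by a constant times a Gaussian in $n$, and sum it with the elementary estimate
\[
\sum_{n\ge M+1}e^{-\mu n^2}\ \le\ \frac{e^{-\mu M^2}}{2\mu M}\qquad(\mu>0,\ M\ge 1\ \text{integer}),
\]
which follows from $n^2-M^2=(n-M)(n+M)\ge 2M(n-M)$ for $n\ge M+1$, summing the geometric series, and using $e^x-1\ge x$. Besides this I will only need two arithmetic facts: $|d|\le 2a_{+}b_{+}$, because $|a_1b_2-a_2b_1|\le(a_1+a_2)(b_1+b_2)$; and $A_n,B_n,C_n,D_n\ge 4(n-1)^2a_{+}b_{+}$ for $n\ge 1$, which for $A_n,B_n$ is immediate from the first line of (\ref{eq:ABCD_def}) (replace $n(n-1)$ and $n^2$ by the smaller $(n-1)^2$), and for $C_n,D_n=4n^2a_{+}b_{+}\pm 2nd$ from $C_n,D_n\ge 4n^2a_{+}b_{+}-4na_{+}b_{+}=4n(n-1)a_{+}b_{+}$.

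\emph{The bound for $R_{2,N}$.} In the $n$-th term of (\ref{eq:my_formula_comp}) estimate $|\sin(\pi nb_1/2b_{+})|\le 1$ and
\[
\Bigl|\int_{0}^{2a_{+}}\sin\!\Bigl(\tfrac{\pi n\varphi}{2a_{+}}\Bigr)e^{-\frac{(\varphi-a_1)^2(T+b_{+}/a_{+})}{2}}\,d\varphi\Bigr|\ \le\ \int_{-\infty}^{\infty}e^{-\frac{(\varphi-a_1)^2(T+b_{+}/a_{+})}{2}}\,d\varphi\ =\ \sqrt{\tfrac{2\pi}{\,T+b_{+}/a_{+}\,}},
\]
so that the prefactor $\sqrt{T+b_{+}/a_{+}}/\sqrt{a_{+}b_{+}}$ becomes $\sqrt{2\pi/(a_{+}b_{+})}$. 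Using $\tfrac{1}{b_{+}/a_{+}}-\tfrac{1}{T+b_{+}/a_{+}}=\tfrac{a_{+}^2T}{b_{+}(a_{+}T+b_{+})}$, the $n$-dependent exponential in (\ref{eq:my_formula_comp}) equals exactly $\exp\bigl(-\tfrac{\pi^2 n^2}{8(a_{+}b_{+}+b_{+}^2/T)}\bigr)$, and bounding $e^{d^2/(2a_{+}b_{+})}\le e^{2a_{+}b_{+}}$ (again by $|d|\le 2a_{+}b_{+}$) the $n$-th term is at most $\tfrac{\sqrt{2\pi}}{\sqrt{a_{+}b_{+}}}e^{2a_{+}b_{+}}\exp\bigl(-\tfrac{\pi^2 n^2}{8(a_{+}b_{+}+b_{+}^2/T)}\bigr)$. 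Applying the tail estimate with $M=N$ and $\mu=\pi^2/(8(a_{+}b_{+}+b_{+}^2/T))$ and collecting the constant ($\tfrac{4\sqrt{2\pi}}{\pi^2}=2(2/\pi)^{3/2}$) gives precisely (\ref{eq:R_2_bound}).

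\emph{The bound for $R_{1,N}$.} Now $R_{1,N}=\sum_{n>N}$ of the four families in (\ref{eq:Anderson}); observe that each $\Psi$ there is the integral of the Gaussian density of variance $T$ over the fixed window $[-a_1T-b_1,\,a_2T+b_2]$ (of width $2(a_{+}T+b_{+})$), with centre $c_n$ moving linearly with slope $\pm 4b_{+}$. If $N<a_{+}T/b_{+}$ I use only $|\Psi|\le 1$ with $A_n,\dots,D_n\ge 4(n-1)^2a_{+}b_{+}$: each family is $\le\sum_{n>N}e^{-8(n-1)^2a_{+}b_{+}}=\sum_{m\ge N}e^{-8m^2a_{+}b_{+}}$, and the tail estimate ($M=N-1$, $\mu=8a_{+}b_{+}$) gives $R_{1,N}\le 4\cdot\tfrac{1}{16(N-1)a_{+}b_{+}}e^{-8(N-1)^2a_{+}b_{+}}$, the second branch of (\ref{eq:R_1_bound}). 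If $N\ge a_{+}T/b_{+}$ I additionally use the decay of $\Psi$: with $a_1,a_2\le 2a_{+}$, $b_1,b_2\le 2b_{+}$ and $a_{+}T\le Nb_{+}$ one checks that for every $n>N$ the centre $c_n$ lies outside the window, i.e.\ the relevant pair $\alpha_n<\beta_n$ in (\ref{eq:Anderson}) has $\beta_n<0$ with $\beta_n^2=\delta_n^2/T$ for $\delta_n=\operatorname{dist}(c_n,\text{window})$; since the Gaussian density increases on $(-\infty,0]$,
\[
\Psi(\alpha_n,\beta_n)\ \le\ \frac{1}{\sqrt{2\pi}}e^{-\delta_n^2/(2T)}(\beta_n-\alpha_n)\ =\ \frac{2(a_{+}T+b_{+})}{\sqrt{2\pi T}}\,e^{-\delta_n^2/(2T)}.
\]
The exponents then combine cleanly: writing out e.g.\ $2A_n+\delta_n^2/(2T)$ with $\delta_n=4nb_{+}-a_2T-b_2-2b_1$, expanding $\delta_n^2$ and completing the square, the identities $a_{+}b_2=d+a_2b_{+}$, $b_2+2b_1=2b_{+}+b_1$ (and their analogues) make all linear/constant terms reassemble into manifestly nonnegative quantities, leaving
\[
2A_n+\frac{\delta_n^2}{2T}\ =\ 8(n-1)^2a_{+}b_{+}+\frac{8b_{+}^2}{T}\Bigl(n-\tfrac{b_2+2b_1}{4b_{+}}\Bigr)^2+(\text{nonnegative})\ \ge\ 8(n-1)^2\Bigl(a_{+}b_{+}+\frac{b_{+}^2}{T}\Bigr)
\]
for $n\ge 1$ (here $\tfrac{b_2+2b_1}{4b_{+}}\le 1$), and similarly for the $B_n,C_n,D_n$ families. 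Hence each family is $\le\tfrac{2(a_{+}T+b_{+})}{\sqrt{2\pi T}}\sum_{m\ge N}e^{-8m^2(a_{+}b_{+}+b_{+}^2/T)}$; the tail estimate ($M=N-1$, $\mu=8(a_{+}b_{+}+b_{+}^2/T)$), the identity $\tfrac{2(a_{+}T+b_{+})}{\sqrt{2\pi T}}=\sqrt{\tfrac{2}{\pi b_{+}^2/T}}(a_{+}b_{+}+b_{+}^2/T)$, and the (lossy) inequalities $\tfrac14\le\tfrac{a_{+}b_{+}+b_{+}^2/T}{4a_{+}b_{+}+b_{+}^2/T}$ and $8(a_{+}b_{+}+b_{+}^2/T)\ge 4a_{+}b_{+}+4b_{+}^2/T$ then yield the first branch of (\ref{eq:R_1_bound}).

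\emph{Where the difficulty lies.} The two Gaussian-tail summations and the inequalities $|d|\le 2a_{+}b_{+}$, $A_n,\dots,D_n\ge 4(n-1)^2a_{+}b_{+}$ are routine. The one delicate point is the regime $N\ge a_{+}T/b_{+}$ of $R_{1,N}$: one must check simultaneously, for all four families (whose windows/centres are mutually shifted by $\pm 2b_1$), that $\delta_n>0$ for every $n>N$ — this is exactly what the hypothesis $N\ge a_{+}T/b_{+}$ buys, via $a_i\le 2a_{+}$, $b_i\le 2b_{+}$ — and, more subtly, carry out the completion of the square so as to see that upon expanding $\delta_n^2$ the cross terms absorb the unwanted linear/constant contributions of $2A_n,\dots,2D_n$, leaving the clean bound $8(n-1)^2(a_{+}b_{+}+b_{+}^2/T)$; this rests on the small algebraic identities among $a_{+},b_{+},d$ and the $a_i,b_j$. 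The remaining work — matching the multiplicative constants so that the four families add up to the stated prefactors — is then mechanical.
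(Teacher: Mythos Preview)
Your treatment of $R_{2,N}$ and of the second (unconditional) branch of $R_{1,N}$ is exactly the paper's argument, down to the same tail estimate $\sum_{n>M}e^{-\mu n^2}\le e^{-\mu M^2}/(2\mu M)$ and the same crude bound $A_n,B_n,C_n,D_n\ge 4(n-1)^2a_{+}b_{+}$.

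For the first branch of $R_{1,N}$ (the regime $N\ge a_{+}T/b_{+}$) the paper takes a noticeably simpler route than the completion of the square you single out as ``delicate''. It never combines $2A_n$ with $\delta_n^2/(2T)$ algebraically. Instead it first uses the symmetry $\Psi(\alpha,\beta)=\Psi(-\beta,-\alpha)$ to flip the two families whose window sits on the negative axis, so that all four $\Psi$'s have positive arguments; then, from $a_i\le 2a_{+}$, $b_i\le 2b_{+}$ and $a_{+}T\le Nb_{+}$, it checks that every lower argument is at least $2nb_{+}/\sqrt{T}$ for $n\ge N+1$. With the \emph{same} crude bound $A_n,\ldots,D_n\ge 4(n-1)^2a_{+}b_{+}$ already used for the other branch, this gives directly
\[
R_{1,N}\ \le\ 4\sum_{n>N}e^{-8(n-1)^2a_{+}b_{+}}\cdot\frac{2(a_{+}T+b_{+})}{\sqrt{2\pi T}}\,e^{-2n^2b_{+}^2/T}
\ \le\ \frac{8(a_{+}T+b_{+})}{\sqrt{2\pi T}}\sum_{m\ge N}e^{-m^{2}(8a_{+}b_{+}+2b_{+}^2/T)},
\]
and the Gaussian-tail estimate finishes. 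So the two exponentials are simply multiplied and then $n^2\ge(n-1)^2$ is used; no identities among $a_{+},b_{+},d,a_i,b_j$ beyond the trivial ones, and no four-fold case-by-case square completion, are needed. What you describe as the one subtle step in your outline therefore does not arise in the paper's proof at all; your route would work too, but it is strictly more laborious and you have not actually carried out the verification you yourself flag. (Incidentally, the exponent the paper's proof produces is $(N-1)^2(8a_{+}b_{+}+2b_{+}^2/T)$, which differs slightly from the $(N-1)^2(4a_{+}b_{+}+4b_{+}^2/T)$ displayed in the statement; your ``lossy inequalities'' are only there to bridge that discrepancy.)
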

\begin{proof}
At first we note that $A_n$, $B_n$, $C_n$, $D_n$ are larger than $4(n-1)^2a_{+}b_{+}$ (see \cite{Ycart}) and apply second property of function $\Psi$ from (\ref{eq:Psi_properties}):
\begin{equation}
\begin{array}{l}
R_{1,N} \leq  \sum_{n = N + 1} ^{\infty} e^{-8(n-1)^2 a_{+} b_{+}}
\Psi\left(\frac{-a_1 T - b_1 +2b_1 + 4(n-1) b_{+}}{\sqrt{T}}, \,
          \frac{a_2 T + b_2 +2b_1 + 4(n-1) b_{+}}{\sqrt{T}}\right)
\\
+ 
\sum_{n = N + 1} ^{\infty} e^{-8(n-1)^2 a_{+} b_{+}}
\Psi\left(\frac{-a_2 T - b_2 -2b_1 + 4 n b_{+}}{\sqrt{T}}, \,
          \frac{a_1 T + b_1 -2b_1 + 4n b_{+}}{\sqrt{T}}
\right)
\\ \nonumber
+
\sum_{n = N + 1} ^{\infty} e^{-8(n-1)^2 a_{+} b_{+}}
\Psi\left(\frac{-a_1 T - b_1 + 4n b_{+}}{\sqrt{T}}, \,
          \frac{a_2 T + b_2 + 4n b_{+}}{\sqrt{T}} \right)
\\ 
+
\sum_{n = N + 1} ^{\infty} e^{-8(n-1)^2 a_{+} b_{+}}
\Psi\left(\frac{-a_2 T - b_2 + 4n b_{+}}{\sqrt{T}}, \,
          \frac{a_1 T + b_1 + 4n b_{+}}{\sqrt{T}} 
\right) 
\end{array}
\end{equation}
If quantity $a_{+} T / b_{+}$ is sufficiently large, then we can use Ycart and Drouilhet bounds \cite{Ycart} (this is second line in formula (\ref{eq:R_1_bound})) for the remainder $R_{1,N}$. If this term is sufficiently small, then we derive upper bound in the assumption $N > a_{+} T / b_{+}$.  

Now we show that for any $n \geq N+1$ the arguments of any function $\Psi(\alpha, \beta)$ from the formula above are positive: 
\begin{eqnarray}
\nonumber
-a_1 T - b_1 +2b_1 + 4(n-1) b_{+} 
&\geq& -2a_{+} T + b_1 + 4(n - 1) b_{+},
\\ \nonumber
&\geq&  -2a_{+} T + b_1 + 4 N b_{+}
\\ \nonumber
&\geq& 2N b_{+}
\end{eqnarray}
for the second and third line: 
\begin{eqnarray}
\nonumber
-a_2 T - b_2+ 4 n b_{+}
&\geq& -a_2 T - b_2 -2b_1 + 4 n b_{+}
\\ \nonumber
&\geq& -2a_{+} T - b_2 -2b_1 + 4 n b_{+}
\\ \nonumber
&\geq& -2a_{+} T - b_2 -2b_1 + 4 (N + 1) b_{+}
\\ \nonumber
&\geq& - b_2 -2b_1 + 4b_{+} +2 N b_{+}
\\ \nonumber
&\geq& 2 N b_{+}
\end{eqnarray}
and for fourth line we apply the following inequality:
\begin{eqnarray}
\nonumber
-a_1 T - b_1 + 4n b_{+}
\geq 
-2a_{+} T - b_2 -2b_1 + 4 n b_{+}.
\end{eqnarray}
Therefore, 
\begin{equation}
\nonumber
R_{1,N} \leq  4\sum_{n = N + 1} ^{\infty} e^{-8(n-1)^2 a_{+} b_{+}}
\Psi\left(\frac{2 n b_{+}}{\sqrt{T}}, \,
\frac{2a_{+} T + 2 b_{+} + 2 n b_{+}}{\sqrt{T}}\right)
\end{equation}
Application of the following inequality 
\[
\Psi(\alpha, \beta) \leq \frac{\beta - \alpha}{\sqrt{2\pi}} e^{-\alpha^2 / 2}, \quad \beta > \alpha > 0. 
\]
yields the following upper bound for the remainder $R_{1,N}$ :
\begin{eqnarray}
\nonumber
R_{1,N} &\leq&  \frac{8(a_{+} T + b_{+})}{\sqrt{2 \pi T}}
\sum_{n = N + 1} ^{\infty} e^{-8(n-1)^2 a_{+} b_{+} - 2 n^2 b_{+}^2 / T}
\\ \nonumber
&\leq&  \frac{8(a_{+} T + b_{+})}{\sqrt{2 \pi T}}
\sum_{n = N + 1} ^{\infty} e^{-(n-1)^2 (8a_{+} b_{+} + 2 b_{+}^2 / T) }, \quad 
\left \{ \sum_{n= N+1}^{\infty} e^{-un^2} \leq \frac{e^{-uN^2}}{2 u N}, 
\quad u = 8 a_{+} b_{+} + 2 b_{+}^2 / T \right\}
\\ \nonumber
&\leq&  \sqrt{\frac{2}{\pi T}} \frac{a_{+} T + b_{+}}{4a_{+}b_{+} + b_{+}^2 / T}
\frac{e^{-(N-1)^2(8 a_{+} b_{+} + 2 b_{+}^2 / T)}}{N - 1}
\end{eqnarray}
For the remainder $R_{2, N}$ we apply the following bounds:
\begin{eqnarray}
\nonumber
R_{2, N} &=&
\frac{e^{\frac{d^2}{2a_{+} b_{+}}} \sqrt{ T  + b_{+} / a_{+}} }{\sqrt{a_{+}b_{+}}} 
\sum_{n = N+1} ^{\infty}
e^{-\frac{\pi^2 n^2}{8 (a_{+} b_{+} + b_{+}^2 / T)}}
\sin\left( \frac{\pi n b_1}{2 b_{+}}\right)
\int_{0}^{2 a_{+}} 
\sin\left( \frac{\pi n  \varphi}{2 a_{+}} \right)
e^{-\frac{(\varphi - a_1)^2 (T + b_{+} / a_{+})}{2}}
d\varphi 
\\ \nonumber
&\leq&
\frac{e^{\frac{d^2}{2a_{+} b_{+}}}}{\sqrt{a_{+}b_{+}}} 
\sum_{n = N+1} ^{\infty}
e^{-\frac{\pi^2 n^2}{8 (a_{+} b_{+} + b_{+}^2 / T)}}
\sin\left( \frac{\pi n b_1}{2 b_{+}}\right)
\int_{\mathbb{R}} 
e^{-\varphi^2 / 2} d\varphi
\\ \nonumber
&\leq&
\sqrt{\frac{2\pi}{a_{+}b_{+}} } e^{\frac{d^2}{2a_{+} b_{+}}}
\sum_{n = N+1} ^{\infty}
e^{-\frac{\pi^2 n^2}{8 (a_{+} b_{+} + b_{+}^2 / T)}}, 
\quad \left \{ \sum_{n= N+1}^{\infty} e^{-un^2} \leq \frac{e^{-uN^2}}{2 u N}, \quad u = \frac{\pi^2 }{8 (a_{+}b_{+} + b_{+}^2 /T)} \right\}
\\ \nonumber
&\leq&
2 \left( \frac{2}{\pi}\right)^{3/2}
\frac{a_{+} b_{+} + b_{+}^2 / T }{ N \sqrt{a_{+} b_{+}}} e^{2 a_{+} b_{+}} 
e^{-\frac{\pi^2 N^2}{8 (a_{+}b_{+} + b_{+}^2 / T)}}.
\end{eqnarray}
\end{proof}
In case of $T = \infty$ the bounds for remainders depend only on $a_{+} b_{+}$. In the finite horizon case we have one extra term $b_{+}^2 / T$. We show that remainders $R_{1,N}$ and $R_{2, N}$ are characterized only by these terms. Denote by 
\begin{equation}
\label{eq:alpha_beta_def}
\alpha = a_{+} b_{+}, \quad \beta = b_{+}^2 / T
\end{equation}
and employ these new variables in formulas (\ref{eq:R_1_bound}) and (\ref{eq:R_2_bound}):
\begin{eqnarray}
R_{1,N}  &\leq& 
\left \{
\begin{array}{l}
  \sqrt{\frac{2}{\pi \beta}} \frac{\alpha + \beta}{4\alpha + \beta} 
\frac{e^{-(N-1)^2(8 \alpha + 2 \beta)}}{N - 1}, \quad N \geq \alpha / \beta.
\\
\frac{e^{-8 \alpha (N - 1)^2}}{4 \alpha (N - 1)}.
\end{array} \right.
\\
R_{2, N} &\leq&
2 \left( \frac{2}{\pi}\right)^{3/2}
\frac{\alpha + \beta }{ N \sqrt{\alpha} }e^{2\alpha} 
e^{-\frac{\pi^2 N^2}{8 (\alpha + \beta)}}.
\end{eqnarray}

\begin{figure}
	\begin{center}
		\resizebox*{18cm}{!}{\includegraphics{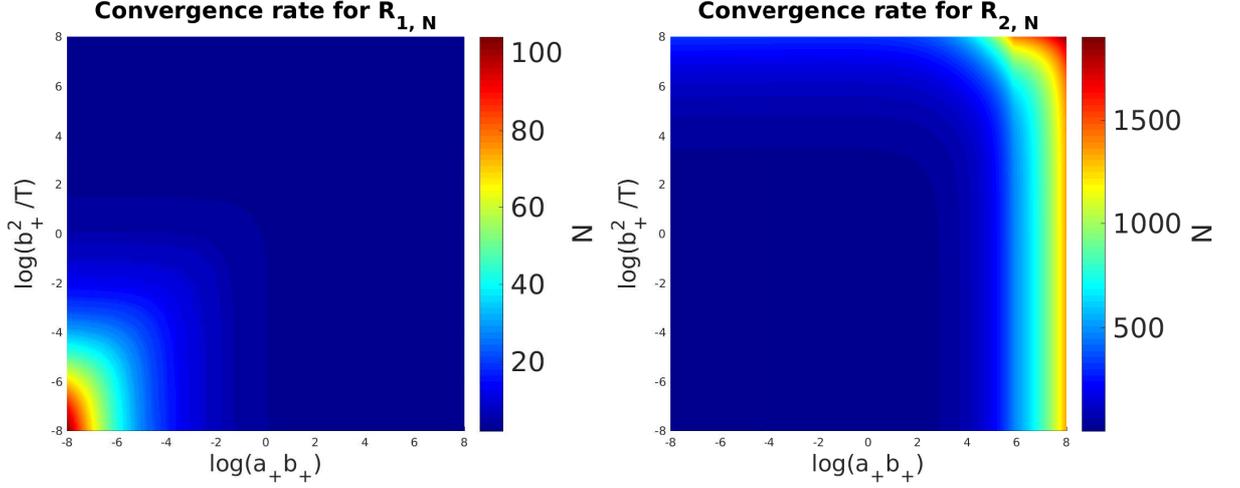}}
		\caption{\label{fig4} Minimal value of $N$ providing precision $10^{-16}$. Left: Anderson's formula. Right: Alternative.  
			\label{fig:conv_rate}}
	\end{center}
\end{figure}

\begin{figure}
	\begin{center}
		\resizebox*{18cm}{!}{\includegraphics{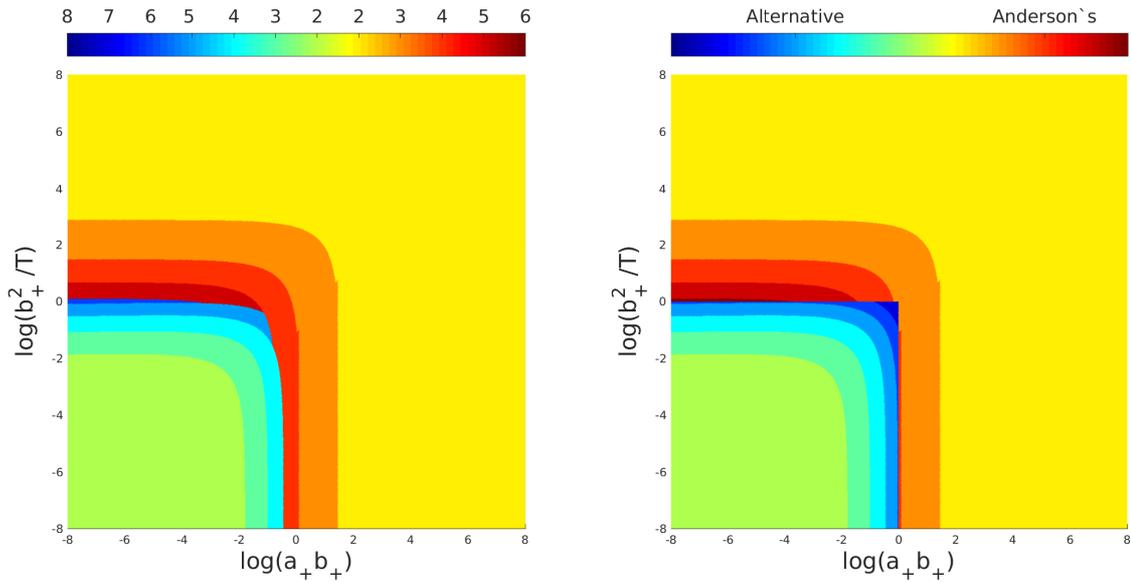}}
		\caption{\label{fig4} Minimal value of $N$ for Anderson's formula and alternative. Right: values for $N$ if we use alternative if and only if $\log(b_{+}^2 / T) < 0$ and $\log(a_{+}b_{+}) < 0$.   
			\label{fig:min_N}}
	\end{center}
\end{figure}

\subsection{Convergence analysis and implementation}
In this subsection we present numerical experiments illustrating convergence rates for the series (\ref{eq:Anderson}) and (\ref{eq:my_formula_comp}). As we mentioned before, convergence rates depend only on values of $\alpha$ and $\beta$ which are defined in (\ref{eq:alpha_beta_def}). Figure \ref{fig:conv_rate} illustrates the log-log plot of convergence rates. Color indicates minimal value of $N$ such that remainders $R_{1,N}$ and $R_{2,N}$ are less than $10^{-16}$. We can see that our formula convergences faster in case of $\alpha < 1$ and $\beta < 1$. In other cases Anderson's formula converges faster than alternative.  

Left sub-figure in \ref{fig:min_N} illustrates dependencies of minimal value $N$ on values of $\alpha$ and $\beta$. Let us mention, that minimal value of $N$ is equal to 5 for Anderson's formula and to 6 for the alternative. Formula (\ref{eq:My_formula}) should be used in the region defined by the curve which separates blue and red zones (see left Figure \ref{fig:min_N}). Implementation of this rule can be difficult, so we suggest the following rule: we use our formula if and only if $\log(\alpha) <0$ and $\log(\beta) <0$. Values of $N$ are presented in right Figure \ref{fig:min_N}. In this case minimal value of $N$ is increases up to 8 terms. 

We implement our algorithm in \textbf{C++11}. All terms in formula (\ref{eq:Anderson}) have been coded by using of functions from the standard library (\textbf{std::exp} and \textbf{std::erf}). Computation of definite integral in formula (\ref{eq:my_formula_comp}) have been implemented by the help of Simpson' formula. Numerical experiments have been made using vectors of simulated entries with uniform distribution on $[0, 10]$ for $a_1$, $b_1$, $a_2$, $b_2$ and on $[0, 100]$ for $T$. The running time on a standard laptop is approx. 7 seconds for $10^7$ values.
\begin{remark}
If we rearrange our formula (\ref{eq:my_formula_comp}) in the manner of Ycart and Drouilhet formula \cite[formula 4]{Ycart} (i.e. apply product-to-sum identity for sines), we shall cut $N$ in half. Hence, in these terms we have $N = 3$ (i.e. is equal to the infinite horizon case) for our formula.   
\end{remark}


\end{document}